
\documentclass[twoside,twocolumn]{article}


  \usepackage{latexsym}
  \usepackage{amsmath}   
  \usepackage{amsfonts}
  \usepackage{amssymb}
  \usepackage{graphicx}
  \usepackage{amsthm}
  \usepackage{url}
  \urlstyle{tt}
  \usepackage[noadjust]{cite}
  \usepackage[bottom=3.5cm, top=2.5cm, left=2.5cm, right=2.5cm, columnsep=0.8cm]{geometry}
  \pagestyle{empty}

  \renewcommand{\mod}{\mathrm{mod}\,\,}
  \newcommand{\alert}{}

  \newcommand{\field}[1]{\mathbb{#1}}
  \newcommand{\C}{\field{C}}
  \newcommand{\R}{\field{R}}

  \newcommand{\BK}{\field{K}}

  \newcommand{\HQ}{\field{H}}
  \newcommand{\vect}[1]{\ensuremath{\mbox{\textbf{\textit{#1}}}}}
  \newcommand{\svect}[1]{\ensuremath{\mbox{\textbf{\textit{\small #1}}}}}
  \newcommand{\bomega}{\boldsymbol{\omega}}

  \newcommand{\be}{\begin{equation}}
  \newcommand{\ee}{\end{equation}} 
  \newcommand{\bv}[1]{\mathbf{#1}}
  \newcommand{\blue}[1]{{#1}}

  \newcommand{\black}[1]{{#1}}
  \newcommand{\bfr}{\begin{frame}}
  \newcommand{\efr}{\end{frame}}

  \newcommand{\btheta}{\boldsymbol{\theta}}
  \newcommand{\sbtheta}{\boldsymbol{\theta}}
  \newcommand{\sbthetap}{\boldsymbol{\theta^{\prime}}}
  \newcommand{\sbthetaz}{\boldsymbol{\theta_0}}
  
  \newtheorem{theorem}{Theorem}
  \newtheorem{definition}{Definition}
  \newtheorem{lemma}{Lemma}
  \newtheorem{corollary}{Corollary}
  

\title{Clifford (Geometric) Algebra Wavelet Transform}

\author{Eckhard Hitzer, Department of Applied Physics, University of Fukui, 910-8507 Japan}


\begin{document}
 
\twocolumn[{\csname @twocolumnfalse\endcsname

\maketitle  
\thispagestyle{empty}
\pagestyle{empty}

\begin{abstract}
\noindent
While the Clifford (geometric) algebra Fourier Transform (CFT) is global, we introduce here the local Clifford (geometric) algebra (GA) wavelet concept.
We show  how for $n=2,3 (\mod 4)$ continuous $Cl_n$-valued admissible wavelets can be constructed using the similitude group $SIM(n)$. We strictly aim for real geometric interpretation, and replace the imaginary unit $i \in \C$ therefore with a GA blade squaring to $-1$. Consequences due to non-commutativity arise. We express the admissibility condition in terms of a $Cl_{n}$ CFT and then derive a set of important properties such as dilation, translation and rotation covariance, a reproducing kernel, and show how to invert the Clifford wavelet transform. As an explicit example, we introduce Clifford Gabor wavelets. We further invent a generalized Clifford wavelet uncertainty principle. Extensions of CFTs and Clifford wavelets to $Cl_{0,n'}, n' = 1,2 (\mod 4)$ appear straight forward.

\vspace{0.5em}

\subparagraph{Keywords:}
Clifford geometric algebra, Clifford wavelet transform, multidimensional wavelets, continuous wavelets, similitude group.


\textbf{AMS Subj. Class.:}
15A66, 42C40, 94A12.

\vspace*{1.0\baselineskip}

\end{abstract}
}]


\section{Introduction}

The meaning and importance of wavelets is clearly seen in a biographical note on J. P. Morlet: 
\textit{Following in the footsteps of Denis Gabor (father of holography), Morlet was disconcerted by the poor results he [Gabor] obtained; but, being inquisitive and persistent, he asked himself, "Why?" and immediately provided the answer. Gabor paved the time-frequency plane in uniform cells and associated each cell with a wave shape of invariant envelope with a carrier of variable frequency. Morlet kept the constraint resulting from the uncertainty principle applied to time and frequency, but he perceived that it was the wave shape that must be invariant to give uniform resolution in the entire plane. For this he adapted the sampling rate to the} 
\noindent
\fbox{
\parbox[c]{7.5cm}{Permission to make digital or hard copies of all or part of
this work for personal or classroom use is granted without
fee provided that copies are not made or distributed for
profit or commercial advantage and that copies bear this
notice and the full citation on the first page. To copy
otherwise, or republish, to post on servers or to
redistribute to lists, requires prior specific permission
and/or a fee.}}
\textit{
 frequency, thereby creating, in effect, a changing time scale producing a stretching of the wave shape. Today the wavelet transform is also called the "time-scale analysis" approach, which is comparable to the conventional time-frequency analysis. \ldots It has been rediscovered as a very useful tool, particularly in data compression where it can produce significant savings in storage and transmission costs but also in mathematics, data processing, communications,
image analysis, and many other engineering problems.}\cite{PG:JPMorlet}

In order to favorably combine wavelet techniques with Clifford (geometric) algebra, which provides a complete algebra of a vector space and all its subspaces, several efforts have been undertaken. 
They include 
Clifford multi resolution analysis (MRA) \cite{MM:CMRA},
quaternion MRA \cite{LT:QW},
Clifford wavelet networks, 
quaternion wavelet transforms (QWT) applied to image analysis (using the QWT phase concept), image processing and motion estimation \cite{EBC:pubs},
quaternion-valued admissible wavelets, Clifford algebra-valued admissible (continuous) wavelets using complex Fourier transforms for the spectral representation \cite{JL:QaAW}, 
monogenic wavelets over the unit ball \cite{KCF:MonW},
Clifford continuous wavelet transforms (ContWT) in $L_{0,2}$, $L_{0,3}$, 
wavelets on the 3D sphere with Cauchy kernel in Clifford analysis (2009),
diffusion wavelets \cite{SB:Wavelets},
ContWT in Clifford analysis, 
{wavelet frames on the sphere,
benchmarking of 3D Clifford wavelet functions,
metric dependent Clifford analysis,
new multivariable polynomials and associated ContWT: 
Clifford versions of Hermite, Hermitean Clifford-Hermite, bi-axial Clifford-Hermite,
Jacobi, Gegenbauer, Laguerre, and Bessel polynomials \cite{Brackx:etal}.

Fourier transformations have been successfully developed in the framework of real Clifford (geometric) algebra (GA), replacing the imaginary unit $i \in \C$ by a geometric (GA) square root of $-1$ \cite{HA:Groot-1}. These Clifford Fourier transformations (CFT) \cite{HM:CFT_UP_Cl3,HM:ICCA7,HMA:2DCWFT} have already found interesting applications in vector field analysis and pattern matching \cite{ES:CFTVectF}. 
A special case are the socalled quaternion Fourier transforms (QFT) \cite{TAE:QFT,EH:QFT,HMAV:WFTquat}.

We now use the spectral CFT representation in order to develop real Clifford GA wavelets in dimensions $n=2,3 (\mod 4)$. We dimensionally extend \cite{MH:CliffWUP} and elaborate the short summary given in \cite{EH:RCliffWT} by adding proofs and generalizations.

In Section 2 Clifford (geometric) algebra is introduced including multivector signal functions, the Clifford Fourier transform, and the similitude group of dilations, rotations and translations. Section 3 defines Clifford mother and daughter wavelets, spectral representation, discusses admissibility, the Clifford wavelet transformation and its spectral CFT representation. This is followed by a detailed discussion of Clifford wavelet properties, i.e. linearity, covariance w.r.t. dilation, rotation and translation, inner product and norm relations, the inverse Clifford wavelet transform, a reproducing kernel and a Clifford wavelet uncertainty principle. Finally the example of Clifford Gabor wavelets is given.

\section{Clifford (geometric) algebra and multivector signals}

\subsection{Clifford (geometric) algebra}

Clifford (geometric) algebra is based on the geometric product of vectors $\vect{a},\vect{b} \in \R^{p,q}, p+q=n$ 
\begin{equation}
  \vect{a}\vect{b} = \vect{a}\cdot\vect{b} + \vect{a}\wedge\vect{b},
\end{equation}
and the associative algebra $Cl_{p,q}$ thus generated with $\R$ and $\R^{p,q}$ as subspaces of $Cl_{p,q}$. $\vect{a}\cdot\vect{b}$ is the symmetric inner product of vectors and $ \vect{a}\wedge\vect{b}$ is Grassmann's outer product of vectors representing the oriented parallelogram area spanned by $\vect{a},\vect{b}$. 

As an example we take the Clifford geometric algebra $Cl_{3}=Cl_{3,0}$ of three-dimensional (3D) Euclidean space $\R^3=\R^{3,0}$.
$\R^3$ has an orthonormal basis $\{\bv{e}_1, \bv{e}_2, \bv{e}_3\}$. 
$Cl_{3}$ then has an eight-dimensional basis of  
\be
  \label{eq:G3basis}
  \{{1}, 
    \underbrace{\bv{e}_1, \bv{e}_2, \bv{e}_3}_{\text{vectors}},
    {\underbrace{\bv{e}_2\bv{e}_3, \bv{e}_3\bv{e}_1, \bv{e}_1\bv{e}_2}_{\text{area bivectors}}},  
    \underbrace{i=\bv{e}_1\bv{e}_2\bv{e}_3}_{\text{volume trivector}}\}.
\ee
Here $i$ denotes the unit trivector, i.e. the oriented volume of a unit cube, with $i^2=-1$. 
The even grade subalgebra $Cl_{3}^+$ is
isomorphic to Hamilton's quaternions $\HQ$. 
Therefore elements of $Cl_{3}^+$ are also called \textit{rotors} (rotation operators), 
rotating vectors and multivectors of $Cl_{3}$. 

In general $Cl_{p,q}, p+q=n$ is composed of so-called $r$-vector subspaces spanned by the induced bases
\be 
  \label{eq:rvecbasis}
  \{\vect{e}_{k_1} \vect{e}_{k_2} \ldots  \vect{e}_{k_r}
   \mid 1 \leq k_1 < k_2 < \ldots < k_r \leq n \},
\ee
each with dimension $\binom{r}{n}$. The total dimension of the $Cl_{p,q}$ therefore becomes $\sum_{r=0}^n \binom{r}{n} = 2^n$.

General elements called \textit{multivectors}
  $M \in Cl_{p,q}, p+q=n,$ have $k$-vector parts ($0\leq k \leq n$):
  \alert{scalar} part
  $Sc(M) = \langle M \rangle = \langle M \rangle_0 = M_0 \in \R$, 
  \alert{vector} part
  $\langle M \rangle_1 \in \R^{p,q}$, 
  \alert{bi-vector} part
  $\langle M \rangle_2$,  \ldots, 
  and
  \alert{pseudoscalar} part $\langle M \rangle_n\in\bigwedge^n\R^{p,q}$
\begin{equation}\label{eq:MVgrades}
    M  =  \sum_{A=1}^{2^n} M_{A} \vect{e}_{A}
       =  \langle M \rangle + \langle M \rangle_1 + \langle M \rangle_2 + \ldots +\langle M \rangle_n \, .
\end{equation}

The \textit{reverse} of $M \in Cl_{p,q}$ defined as
\begin{equation}\label{eq:MVrev}
  \widetilde{M}=\; \sum_{k=0}^{n}(-1)^{\frac{k(k-1)}{2}}\langle M \rangle_k,
\end{equation}
often replaces \blue{complex conjugation and quaternion conjugation}. Taking the reverse is equivalent to reversing the order of products ob basis vectors in the basis blades of \eqref{eq:rvecbasis}. 
The \alert{scalar product} of two multivectors $M, \widetilde{N} \in Cl_{p,q}$ is defined as
\be
    M \ast \widetilde{N} 
    = \langle M\widetilde{N} \rangle 
    = \langle M\widetilde{N} \rangle_0.
\ee
  For $M, \widetilde{N} \in Cl_{n}=Cl_{n,0}$ we get $M\ast \widetilde{N}=\sum_{A} M_A N_A.$
  The \blue{modulus} $|M|$ of a multivector $M \in Cl_{n}$ is defined as 
  \be
     |M|^2 = {M\ast\widetilde{M}}= {\sum_{A} M_A^2}.
  \ee
  For $n=2(\mod 4)$ and $n=3(\mod 4)$ the \blue{pseudoscalar} is
  $i_n=\vect{e}_1\vect{e}_2\ldots\vect{e}_n$ with 
  (also valid for\footnote{%
  For an extension of the current real Clifford (geometric) algebra wavelet approach to Clifford algebras $Cl_{0,n'}$ the definition of reversion has to be modified to include sign changes of negative definite basis vectors $\vect{e}_k \rightarrow \widetilde{\vect{e}}_k = -\vect{e}_k, 1\leq k \leq n'$.}
  $Cl_{0,n'}, \,n'=1,2(\mod 4)$)
  \begin{equation}
     \alert{i_n^2=-1}.
  \end{equation} 
  A \blue{blade} $B_k=\vect{b}_1\wedge\vect{b}_2\wedge\ldots\wedge\vect{b}_k, \vect{b}_l\in\R^{p,q}, 1\leq l \leq k \leq n=p+q$ describes a $k$-dimensional vector \textit{subspace} 
  \be
    V_B=\{ \vect{x}\in \R^{p,q} | \vect{x}\wedge B =0 \}.
  \ee 
  Its \blue{dual blade} 
  \be
  B^{\ast}= Bi_n^{-1}
  \ee
  describes the \textit{complimentary} $(n-k)$-dimensional vector subspace $V^{\perp}_B$. 
  The pseudoscalar $i_n \in Cl_n$ is \textit{central} for $n=3(\mod 4)$
  \be
  \qquad i_n \, M = M \, i_n , \qquad \forall M \in Cl_{n}.
  \ee
But for even $n$ we get due to non-commutativity \cite{HM:ICCA7}
of the pseudoscalar $i_n \in Cl_n $ 
for all $M \in Cl_n, \lambda \in \R$
  \begin{align} \label{eq:evencom}
     i_n  M &= M_{even}  \,i_n - M_{odd} \,i_n\,,  
     \\
     e^{i_n\lambda} M 
     &= M_{even} \;e^{i_n\lambda} + M_{odd} \;e^{-i_n\lambda}.
  \end{align}

\subsection{Multivector signal functions}

\blue{A multivector valued function} 
  $f: \R^{p,q} \rightarrow Cl_{p,q}, \,\, p+q=n, $ has $2^n$ blade components
  $(f_A: \R^{p,q} \rightarrow \R)$
  \begin{equation}\label{eq:MVfunc}
    f(\mbox{\textbf{\textit{x}}})  =  \sum_{A} f_{A}(\vect{x}) {\vect{e}}_{A}.
  \end{equation}
We define the \textit{inner product} of 
$\R^n \rightarrow Cl_{n}$ functions  $f, g $ by
\begin{align}
  \label{eq:mc2}
  (f,g) 
  &= \int_{\R^n}f(\vect{x})
    \widetilde{g(\vect{x})}\;d^n\vect{x}
  \\
  &= \sum_{A,B}\vect{e}_A \widetilde{\vect{e}_B}
    \int_{\R^n}f_A (\vect{x})
    g_B (\vect{x})\;d^n\vect{x},
\end{align}
and the $L^2(\mathbb{R}^n;Cl_{n})$-\textit{norm} 
\begin{align}\label{eq:0mc2}
   \|f\|^2 
   &= \left\langle ( f,f ) \right\rangle
   = \int_{\mathbb{R}^n} |f(\vect{x})|^2 d^n\vect{x}
   \nonumber \\
   &= \sum_{A} \int_{\R^n} f_A^2(\vect{x})\;d^n\vect{x},
   \\
   L^2(\R^n;Cl_{n})
   &= \{f: \R^n \rightarrow Cl_{n} \mid \|f\| < \infty \}. 
\end{align}

For the \textit{Clifford geometric algebra Fourier transformation} (CFT) \cite{HM:ICCA7}
the \blue{complex unit $i \in \C$} is replaced by  
some \textit{geometric} (square) \textit{root} of $-1$, e.g. pseudoscalars $i_n$, $n=2,3(\mod 4)$. 
\blue{Complex functions $f$} are replaced by multivector functions $f \in L^2(\R^{n};Cl_{n})$. 
\begin{definition}[Clifford geometric algebra Fourier transformation (CFT)]  
\label{df:CFT}
The Clifford GA Fourier transform\footnote{ The CFT can be defined analogously for $Cl_{0,n'}, \,n'=\alert{1},\blue{2}(\mod 4)$.} 
  $\mathcal{F} \{f\}$: $\R^n \rightarrow Cl_n, \,n=\alert{2},{3}(\rm mod\,4)$ 
is given by
\begin{equation}\label{eqmk1}
  \mathcal{F}\{f\}(\bomega)
  = \widehat{f}(\bomega)
  = \int_{\R^n} f(\vect{x})
    \,e^{-i_n \bomega \cdot \svect{x}}\, d^n\vect{x},
\end{equation}
for multivector functions $f$: $\R^n \rightarrow Cl_n $. 
\end{definition}
The CFT \eqref{eqmk1} is \textit{inverted} by 
\begin{align}\label{eq11}
  f(\vect{x})
  &= \mathcal{F}^{-1}[\mathcal{F}\{f\}(\bomega )]
  \nonumber
  \\
  &= \frac{1}{(2\pi)^n} \int_{\R^n}\mathcal{F}\{f\}(\bomega ) \, 
    e^{i_n\bomega \cdot \svect{x}}\, d^n \bomega .
\end{align}

The \textit{similitude group} $\mathcal{G}=SIM(n)$ of \alert{dilations, rotations} and \alert{translations} is a subgroup of  the affine group of $\mathbb{R}^n$ 
\begin{align}\label{eq:mc3}
  \mathcal{G} 
  &= \mathbb{R}^+ \times SO(n)\rtimes \mathbb{R}^n 
  \nonumber
  \\
  &= \{(a,r_{\sbtheta}, \vect{b})|
    a \in \mathbb{R}^+,r_{\sbtheta} \in SO(n), 
    \vect{b} \in \mathbb{R}^n \}.
\end{align}
The \textit{left Haar measure} on $\mathcal{G}$ is given by
\begin{align}
  d\lambda 
  &= d\lambda(a,\btheta,\vect{b}) 
  = d\mu(a,\btheta) d^n\vect{b},
  \\
  d\mu 
  &= d\mu(a,\btheta) 
  = \frac{da d\btheta} {a^{n+1}},
\end{align}
where $d\btheta$
is the Haar measure on $SO(n)$. For example
\begin{equation}
  d\btheta = 
  \left\{
  \begin{array}{ll}
     \frac{d\theta}{2 \pi}, & n=2 \vspace*{1mm} \\ 
     \frac{1}{8\pi^2}\sin\theta_1 d\theta_1
  d\theta_2d\theta_3, & n=3
  \end{array}
  \right. \,\, .
\end{equation}
We define the \textit{inner product} of 
$ f,g : \mathcal{G} \rightarrow Cl_{n}$ by
\begin{equation}
  ( f,g ) =
  \int_{\mathcal{G}} f(a, \btheta, \vect{b})
  \widetilde{g(a, \btheta, \vect{b})}\;
  d\lambda(a,\btheta,\vect{b} ),
\end{equation}
and the $L^2(\mathcal{G};Cl_{n})$-\textit{norm} 
\begin{align}
  \|f\|^2 
  &= \left\langle ( f,f )\right\rangle
  = \int_{\mathcal{G}} |f(a,\btheta, \vect{b})|^2 
    d\lambda, 
  \\
  L^2(\mathcal{G};Cl_{n})
  &= \{f: \mathcal{G} \rightarrow Cl_{n} \mid \|f\| < \infty \}. 
\end{align}
The variations of a multivector signal $f\in L^2(\mathbb{R}^n;Cl_{n})$ in position $\vect{x}\in \mathbb{R}^n$ and frequency $\bomega\in \mathbb{R}^n$ are related by the \textit{CFT uncertainty principle}~\cite{HM:ICCA7,HM:UP2005,HM:UP2006,HM:UP2008}
\begin{align}\label{eq:UPCFT}
  &\|\vect{x} f\|_{L^2(\mathbb{R}^n;Cl_{n})}^2\;
  \|\bomega \hat{f}\,\|_{L^2(\mathbb{R}^n;Cl_{n})}^2
  \nonumber
  \\
  & \hspace*{15mm}
  \geq 
  n \frac{(2\pi)^n}{4} \|f\|^4_{L^2(\mathbb{R}^n;Cl_{n})}.
\end{align}

\section{Clifford GA wavelets}

\subsection{Real admissible continuous Clifford GA wavelets}

We represent the transformation group {$\mathcal{G}=SIM(n)$}
by applying
\alert{translations}, \alert{scaling and rotations} to a 
so-called
\textit{Clifford mother wavelet}
$\psi : \R^n \rightarrow Cl_n$
\begin{equation}
  \psi(\vect{x}) \longmapsto 
  \mbox{$\psi$}_{a,\btheta,\svect{b}}
  (\vect{x})
  = \frac{1}{a^{n/2}} \psi 
  (r_{\btheta}^{-1}(\frac{\vect{x}-\vect{b}}{a})).
\end{equation}
The family of wavelets 
$\psi_{a,\btheta,\svect{b}}$
are so-called \textit{Clifford daughter wavelets}.
\begin{lemma}[Norm identity]
The factor ${a^{{-n}/{2}}}$ in $\psi_{ a,\btheta,\svect{b}}$ ensures (independent of $a, \btheta, \vect{b}$) that
\begin{equation}\label{eq:0mc7}
\|\psi_{ a,\btheta,\svect{b}}\|_{L^2(\mathbb{R}^n;Cl_{n})} 
= \| \psi \|_{L^2(\mathbb{R}^n;Cl_{n})}.
\end{equation}
\end{lemma}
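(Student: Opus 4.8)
The plan is to evaluate $\|\psi_{a,\btheta,\svect{b}}\|^2$ directly from the definition of the $L^2(\R^n;Cl_n)$-norm in \eqref{eq:0mc2} and to reduce it to $\|\psi\|^2$ by the affine change of variables $\vect{y}=r_{\btheta}^{-1}((\vect{x}-\vect{b})/a)$, whose Jacobian cancels the normalization factor exactly.

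First I would substitute the explicit form of the daughter wavelet. Since $a^{-n/2}\in\R$ and $|\lambda M|^2=\lambda^2|M|^2$ for $\lambda\in\R$ (recall $|M|^2=M\ast\widetilde{M}=\sum_A M_A^2$), we have $|\psi_{a,\btheta,\svect{b}}(\vect{x})|^2 = a^{-n}\,\big|\psi\big(r_{\btheta}^{-1}(\tfrac{\vect{x}-\vect{b}}{a})\big)\big|^2$, hence
\[
  \|\psi_{a,\btheta,\svect{b}}\|^2
  = \frac{1}{a^n}\int_{\R^n}\Big|\psi\big(r_{\btheta}^{-1}(\tfrac{\vect{x}-\vect{b}}{a})\big)\Big|^2\,d^n\vect{x}.
\]
Next, putting $\vect{y}=r_{\btheta}^{-1}((\vect{x}-\vect{b})/a)$, i.e.\ $\vect{x}=a\,r_{\btheta}\vect{y}+\vect{b}$, the Jacobian determinant is $a^{n}\det(r_{\btheta})=a^{n}$, because $r_{\btheta}\in SO(n)$ has determinant $1$ and translation by $\vect{b}$ does not change the Jacobian. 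Therefore $d^n\vect{x}=a^{n}\,d^n\vect{y}$, the two powers of $a^{n}$ cancel, and we are left with $\int_{\R^n}|\psi(\vect{y})|^2\,d^n\vect{y}=\|\psi\|^2$. Taking square roots yields \eqref{eq:0mc7}, and the right-hand side carries no dependence on $a,\btheta,\vect{b}$, which gives the claimed independence.

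There is no genuine obstacle here; the only point requiring a little care is that the modulus $|\cdot|$ acts on multivector \emph{values} in $Cl_n$, whereas $r_{\btheta}$ acts only on the \emph{argument} $\vect{x}\in\R^n$. Thus the rotation enters purely through the change of variables and contributes nothing beyond its unit determinant --- no isometry property of a rotor representation on $(Cl_n,|\cdot|)$ is needed. As a byproduct the identity shows $\psi_{a,\btheta,\svect{b}}\in L^2(\R^n;Cl_n)$ whenever $\psi\in L^2(\R^n;Cl_n)$.
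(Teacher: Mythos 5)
Your proposal is correct and follows essentially the same route as the paper: write out $\|\psi_{a,\btheta,\svect{b}}\|^2$ via $|\cdot|^2=\sum_A(\cdot)_A^2$, pull out the real scalar $a^{-n}$, and substitute $\vect{z}=r_{\btheta}^{-1}((\vect{x}-\vect{b})/a)$ with Jacobian $a^n\det(r_{\btheta})=a^n$. The only cosmetic difference is that the paper keeps the componentwise sum $\sum_A\psi_A^2$ explicit while you work with the modulus directly; the substance is identical.
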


\begin{proof}
\begin{align}
  \label{eq:0mc6}
  &\|\psi_{ a,\btheta,\svect{b}}\|_{L^2(\mathbb{R}^n;Cl_{n})}^2
  \nonumber \\
  &=  \int_{\mathbb{R}^n}
  \sum_A \frac{1}{a^n}\psi_A^2 
  (\underbrace{{r_{\btheta}}^{-1}(\frac{\vect{x}-\vect{b}}{a})}_{\displaystyle = \vect{z}})\,d^n\vect{x}
  \nonumber \\
  &= \frac{1}{a^n} \int_{\mathbb{R}^n} \sum_A
  \psi_A^2(\mbox{\boldmath $z$})a^n \det (r_{\btheta})
  \,d^n\mbox{\boldmath $z$}
  \nonumber\\
  &=  \int_{\mathbb{R}^n}\sum_A \psi_A^2(\mbox{\boldmath $z$})\,
  d^n\mbox{\boldmath $z$} 
  = \| \psi \|_{L^2(\mathbb{R}^n;Cl_{n,0})}.
\end{align}
\end{proof}

The \textit{spectral} CFT representation of Clifford daughter wavelets is 
\begin{equation}
  \label{eq:mc7}
  \mathcal{F}\{\mbox{$\psi$}_{ a,\btheta,\vect{b}}\}
   (\bomega) 
  =
  a^{\frac{n}{2}}\widehat{\psi}
  (a r_{\btheta}^{-1}(\bomega))
  e^{-i_n\svect{b}\cdot \bomega} \,.
\end{equation}
In the proof of \eqref{eq:mc7} the CFT properties of scaling, $\vect{x}$-shift and rotation
are applied. 
A Clifford mother wavelet $\psi \in L^2(\R^n;Cl_{n})$
is \textit{admissible} if
\begin{align}
  \label{eq:admconst}
C_{\psi} 
  &=  \int_{\mathbb{R}^+}\int_{S0(n)}a^n \{\widehat{\psi}
  (a r_{\sbtheta}^{-1}(\bomega))\}^{\sim}
  \widehat{\psi}(a r_{\sbtheta}^{-1}(\bomega))\;d\mu
  \nonumber\\
  &= \int_{\mathbb{R}^n} 
  \frac{\widetilde{\widehat{\psi}}(\bomega) \widehat{\psi}(\bomega)}
  {|\bomega|^n} \; d^n\bomega, 
\end{align}
is an \textit{invertible multivector constant and finite} at  a.e. 
$\bomega \in \R^n$.
We must therefore have $\widehat{\psi}(\bomega=0)=0$ 
\begin{align}
  \hat{\psi}(0) 
  &= \int_{\mathbb{R}^n}\psi(\mbox{\boldmath $x$})
  e^{i_n 0\cdot\mbox{\boldmath $x$}}
  \;d^n\mbox{\boldmath $x$} 
  =\int_{\mathbb{R}^n} \psi(\mbox{\boldmath $x$})\;d^n\mbox{\boldmath $x$} 
  \nonumber \\
  &=\sum_A \int_{\mathbb{R}^n} \psi_A (\mbox{\boldmath $x$}) 
   \;d^n\mbox{\boldmath $x$}  \; \mbox{\boldmath $e$}_A 
  = 0,
\end{align}
and therefore for \textit{all} $2^n$ Clifford mother wavelet \alert{components} 
\begin{equation}
  \int_{\mathbb{R}^n} \psi_A (\vect{x}) \;d^n\vect{x}  
  = 0.
\end{equation}
By construction $C_{\psi} = \widetilde{C_{\psi}}$. Hence for $n=2,3 (\mod 4)$
\begin{align}
  &C_{\psi} 
  = \langle C_{\psi} \rangle_0 + \langle C_{\psi} \rangle_1 
    + \langle C_{\psi} \rangle_4 + \langle C_{\psi} \rangle_5 + \ldots
  \nonumber \\  
  &= \sum_{k=0}^{[n/4]}(\langle C_{\psi} \rangle_{4k} +  \langle C_{\psi} \rangle_{4k+1}),
\end{align}
and
\begin{align}
  \langle C_{\psi} \rangle_0  
  &=
  \int_{\mathbb{R}^n} \langle \{\widehat{\psi}
  ({\mbox{\boldmath $\xi $}})\}^{\sim}
  \widehat{\psi}({\mbox{\boldmath $\xi $}}
  )\rangle_0 \;\frac{1}{|{\mbox{\boldmath $\xi $}}|^n}d{\mbox{\boldmath $\xi $}}^n
  \nonumber\\
  &= 
  \int_{\mathbb{R}^n}
  \frac{|\widehat{\psi}({\mbox{\boldmath $\xi $}})|^2}{|{\mbox{\boldmath $\xi $}}|^n}d{\mbox{\boldmath $\xi $}}^n
  > 0 .
\end{align}
The invertibility of $C_{\psi}$ depends on its grade content, e.g. for $n=2,3$, $C_{\psi}$ is
\alert{invertible}, if and only if 
$\langle C_{\psi} \rangle_1^2 \neq \langle C_{\psi} \rangle_0^2 \;$:
\begin{equation}
  \label{eq:ICl}
  C_{\psi}^{-1} = \frac{\langle C_{\psi} \rangle_0 -\langle C_{\psi} \rangle_1 }
  {{\langle C_{\psi} \rangle_0^2 -\langle C_{\psi} \rangle_1^2 } }.
\end{equation}

\begin{definition}[Clifford GA wavelet transformation (CWT) ]
\label{df:GAWT}
For an admissible GA mother wavelet $\psi \in L^2(\mathbb{R}^n;Cl_{n})$ 
and a multivector signal function
$f \in L^2(\mathbb{R}^n;Cl_{n})$
\begin{align}\label{eqq4}
  &T_{\psi} : L^2(\mathbb{R}^n;Cl_{n}) \rightarrow  L^2(\mathcal{G};Cl_{n}),
  \\  
  &f \mapsto T_{\psi}f(a,\btheta,\vect{b})
    = \int_{\mathbb{R}^n}f(\vect{x})
    \widetilde{\psi_{a,\btheta,\vect{b}}(\vect{x})}
    \;d^n\vect{x}.
\end{align}
\end{definition}
\begin{itemize}
\item
Because of \eqref{eq:evencom}
we need to \textit{restrict} the mother wavelet $\psi$ for $n=2 (\mod 4)$ to even or odd grades: 
Either we have a \textit{spinor wavelet} 
$\psi \in L^2(\mathbb{R}^n;Cl^+_{n})$ with $\varepsilon = 1$,  
or we have an \textit{odd parity wavelet}
$\psi \in L^2(\mathbb{R}^n;Cl^-_{n})$ with $\varepsilon = -1$.
\item  
For $n=3 (\mod 4) $, no grade restrictions exist. We then always have $\varepsilon=1$.
\end{itemize}

\noindent
\textbf{NB:} The admissibility constant $C_{\psi}$ is always \textit{scalar} for $n=2$, 
for the spinor wavelet as well as for the odd parity vector wavelet.

The {\textit{spectral} (CFT) representation}
of the Clifford wavelet transform is
\begin{align}
  \label{eq:cwf}
&T_{\psi}f(a,\btheta,\vect{b})
  \\ \nonumber
&= \frac{1}{(2\pi)^n} 
  \hspace*{-1mm}\int_{\mathbb{R}^n}\hspace*{-0.5mm}
  \widehat{f}(\bomega)\,
  a^{\frac{n}{2}} 
  \{\widehat{\psi}(a r_{\btheta}^{-1}(\bomega))\}^{\sim}
  e^{\varepsilon i_n \svect{b}\cdot \bomega} \, d^n\bomega.
\end{align}

\begin{proof} 
\begin{align}
  &{T_{\psi}f(a,\mbox{\boldmath $\theta $},\mbox{\boldmath $b$})}
  \stackrel{IP}= 
  ( f, 
  \mbox{$\psi$}_{ a,\btheta,\svect{b}}
  )_{L^2(\mathbb{R}^n;Cl_{n,0})}
  \\
  &\stackrel{PT} = \frac{1}{(2\pi)^n} 
  ( \widehat{f},\widehat{\mbox{$\psi$}_{a,\btheta,\svect{b}}} 
  )_{L^2(\mathbb{R}^n;Cl_{n,0})}
  \nonumber\\
  &= \frac{1}{(2\pi)^n} \int_{\mathbb{R}^n}\hat{f}
(\mbox{\boldmath $\omega$})
\left[\widehat{\mbox{$\psi$}_{a,\btheta,\svect{b}}}
(\mbox{\boldmath $\omega$})\right]^{\sim}\,
d^n\mbox{\boldmath $\omega$}
  \nonumber\\
  & \stackrel{FT}{=} \frac{1}{(2\pi)^n} 
\int_{\mathbb{R}^n}\hat{f}(\mbox{\boldmath $\omega$})\,
e^{i_n \svect{b}\cdot \mbox{\boldmath $\omega$}} a^{\frac{n}{2}} 
\left[\widehat{\psi}(a r_{\sbtheta}^{-1}
(\mbox{\boldmath $\omega$}))\right]^{\sim}\,
d^n\mbox{\boldmath $\omega$}
  \nonumber\\
  &= \frac{1}{(2\pi)^n} 
\int_{\mathbb{R}^n}\hat{f}(\mbox{\boldmath $\omega$})\,
a^{\frac{n}{2}} 
\left[\widehat{\psi}(a r_{\sbtheta}^{-1}
(\mbox{\boldmath $\omega$}))\right]^{\sim}\,
e^{\varepsilon i_n \svect{b}\cdot \mbox{\boldmath $\omega$}} 
d^n\mbox{\boldmath $\omega$},
\nonumber
\end{align}
with $IP$ = inner product \eqref{eq:mc2}, $PT$ = CFT Plancherel theorem \cite{HM:ICCA7}, and 
$FT=\mathcal{F}\{\mbox{$\psi$}_{a,\btheta,\svect{b}}\}$ of \eqref{eq:mc7}.
\end{proof}
\textbf{NB}: The CFT for $n=2(\mod 4)$ \textit{preserves} even and odd grades.

\subsection{Properties of real Clifford GA wavelets}

We immediately see from Definition \ref{df:GAWT} that the Clifford GA wavelet transform 
is \textit{left linear} with respect to multivector constants $ \lambda_1,\lambda_2 \in Cl_{n}$.

We further have the following set of properties. \textit{Translation covariance}: 
If the argument of $T_{\psi}f(\vect{x})$
is \alert{translated} by a constant $\vect{x}_0 \in \mathbb{R}^n$ then
\begin{equation} 
  [T_{\psi}f(\cdot -\vect{x}_0)](a,\btheta, \vect{b}) 
   = T_{\psi}f(a, \btheta,\vect{b}-\vect{x}_0)\,.
\end{equation}

\begin{proof}
By definition
\begin{align}
&[T_{\psi}f(\cdot -\mbox{\boldmath $x$}_0)]
(a,\mbox{\boldmath $\theta $},\mbox{\boldmath $b$}) 
\nonumber \\
&= \int_{\mathbb{R}^n} f(\mbox{\boldmath $x$} - \mbox{\boldmath $x$}_0) 
\widetilde{\psi_{a,\btheta,\svect{b}}(\mbox{\boldmath $x$})}\,
d^n\mbox{\boldmath $x$}
\nonumber \\
&= \int_{\mathbb{R}^n} f(\underbrace{\mbox{\boldmath $x$}-\mbox{\boldmath $x$}_0}_{ = \vect{y}})
\frac{1}{a^{\frac{n}{2}}}\,
\left[\psi(r_{\sbtheta}^{-1}
(\frac{\mbox{\boldmath $x$}-\mbox{\boldmath $b$}}{a})\right]^{\sim}\,
d^n\mbox{\boldmath $x$}
\nonumber\\
&= \int_{\mathbb{R}^n} f(\mbox{\boldmath $y$})
\frac{1}{a^{\frac{n}{2}}}\,
\left[\psi \left(r_{\sbtheta}^{-1}
(\frac{\mbox{\boldmath $y$}-(\mbox{\boldmath $b$}-
\mbox{\boldmath $x$}_0)}{a})\right)\right]^{\sim}\,
d^n\mbox{\boldmath $y$}
\nonumber\\
&= T_{\psi}f(a, \mbox{\boldmath $\theta$}, \mbox{\boldmath $b$}-
\mbox{\boldmath $x$}_0). 
\end{align} 
\end{proof}

\textit{Dilation covariance}:
If $0<c\in \R$ then
\begin{equation} 
  [T_{\psi}f(c\,\cdot)](a,\btheta,\vect{b}) 
  = \frac {1}{c^{\frac{n}{2}}} T_{\psi}f(ca, \btheta, c\vect{b})\;.
\end{equation}

\begin{proof}
By definition
\begin{align}
&[T_{\psi}f(c\,\cdot)](a,\mbox{\boldmath $\theta $},\mbox{\boldmath $b$}) 
\nonumber\\
&= \int_{\mathbb{R}^n} f(c\mbox{\boldmath $x$})\frac{1}{a^{\frac{n}{2}}}\,
\left[\psi(r_{\sbtheta}^{-1}
(\frac{\mbox{\boldmath $x$}-b}{a}))\right]^{\sim}\,
d^n\mbox{\boldmath $x$}
\nonumber\\
& \stackrel{\mbox{$\vect{y} = c \vect{x}$}}{=} \int_{\mathbb{R}^n} f(\mbox{\boldmath $y$})\frac{1}{a^{\frac{n}{2}}}\frac{1}{c^n}\,
\left[\psi\left(r_{\sbtheta}^{-1}(
\frac{\frac{\svect{y}}{c}-\vect{b}}{a})\right)\right]^{\sim}
\,d^n\mbox{\boldmath $y$}
\nonumber\\
& = \frac {1}{c^{\frac{n}{2}}} \int_{\mathbb{R}^n} f(\mbox{\boldmath $y$})
\frac{1}{(ac)^{\frac{n}{2}}}\,
\left[\psi \left(r_{\sbtheta}^{-1}(
\frac{\mbox{\boldmath $y$}-
\mbox{\boldmath $b$}c}{ac})\right)\right]^{\sim}\,d^n\mbox{\boldmath $y$}
\nonumber\\
& = \frac {1}{c^{\frac{n}{2}}}
T_{\psi}f(ac, \mbox{\boldmath $\theta$},\mbox{\boldmath $b$}c). 
\end{align} 
\end{proof}

\textit{Rotation covariance}:
If  $ r   = r_{\sbtheta} $, 
    $ r_0 = r_{\sbthetaz} $
and $ r'  = r_{\sbthetap} = r_0 r = r_{\sbthetaz} r_{\sbtheta} $
are \alert{rotations}, then
\begin{equation}\label{eq:rotC} 
  [T_{\psi}f(r_{\sbthetaz}\cdot)] (a,\btheta,\vect{b}) 
  = T_{\psi}f(a, \btheta^{\prime}, r_{\sbthetaz}\vect{b})\,.
\end{equation}

\begin{proof}
By definition and with substitution 
$\vect{y} = r_{0} \vect{x}$
\begin{align}
&[T_{\psi}f(r_{0}
\cdot)](a,\btheta,\vect{b}) 
\nonumber \\
&=
\int_{\mathbb{R}^n} f(r_{0}\vect{x}) 
\widetilde{\psi_{a,\btheta,\svect{b}}
(\vect{x})}\,
d^n\vect{x}
\nonumber \\
&=
\int_{\mathbb{R}^n} 
f({r_{0} \vect{x}})
\left[\psi(r_{}^{-1}
(\frac{\vect{x}-\vect{b}}{a}))\right]^{\sim}\,
d^n\vect{x}
\nonumber\\
&=
\int_{\mathbb{R}^n} f(\vect{y})
\left[\psi\left(r_{}^{-1}
(\frac{r_{0}^{-1}
\vect{y} - \vect{b}}
{a})\right)\right]^{\sim}
{\det}^{-1}(r_{})\,d^n\vect{y}
\nonumber\\
&= 
\int_{\mathbb{R}^n} f(\vect{y})
\left[\psi \left(r_{}^{-1}
r_{0}^{-1}
(\frac{\vect{y}-r_{0}\vect{b}}{a})
\right)\right]^{\sim}\,d^n\vect{y}
\nonumber\\
&= 
\int_{\mathbb{R}^n} f(\vect{y})
\left[\psi \left((r_{0}r_{})^{-1}
(\frac{\vect{y}-r_{0}\vect{b}}{a})
\right)\right]^{\sim}\,d^n\vect{y}
\nonumber\\
&=
T_{\psi}f(a, \btheta^{\prime},r_{0}\vect{b}).
\end{align}
\end{proof}

Now we will see some \textit{differences} from the classical wavelet transforms. 

The next property is an 
\textit{inner product relation}:
Let $f, g \in L^2(\mathbb{R}^n;Cl_{n})$ arbitrary. 
Then we have
\begin{eqnarray}
  \label{eqM:C1}
  ( T_{\psi}f,T_{\psi}g )_{L^2(\mathcal{G};Cl_{n})} 
  & = &
  ( fC_{\psi}, g )_{L^2(\mathbb{R}^n;Cl_{n})}
\end{eqnarray}

In the following proof of \eqref{eqM:C1} we will use 
the abbreviations
\begin{align}
  \label{eq:abbrF}
F_{a,{\sbtheta}}(\mbox{\boldmath $\omega$})  
&= 
a^{\frac{n}{2}}\hat{f}(\mbox{\boldmath $\omega$}) 
\{\hat{\psi}(a r_{\sbtheta}^{-1}
(\mbox{\boldmath $\omega$}))\}^{\sim}, 
\\
  \label{eq:abbrG}
G_{a,{\sbtheta}}(\mbox{\boldmath $\omega$}^{\prime})  
&= 
a^{\frac{n}{2}}\hat{g}(\mbox{\boldmath $\omega$}^{\prime}) 
\{\hat{\psi}
(a r_{\sbtheta}^{-1}(\mbox{\boldmath $\omega$}^{\prime}))\}^{\sim}\,,
\end{align} 
and the spectral representations \eqref{eq:cwf}
\begin{align}
  \label{eq:specTf}
T_{\psi}f(a,\mbox{\boldmath $\theta $},\mbox{\boldmath $b$})
  &= \frac{\mathcal{F}\{F_{a,\sbtheta}\}(-\varepsilon \vect{b})}{(2\pi)^n},
\\
  \label{eq:specTg}
T_{\psi}g(a,\mbox{\boldmath $\theta $},\mbox{\boldmath $b$})
  &= \frac{\mathcal{F}\{G_{a,\sbtheta}\}(-\varepsilon \vect{b})}{(2\pi)^n}.
\end{align}

\begin{proof}
Using the abbreviations \eqref{eq:abbrF}, \eqref{eq:abbrG} 
and spectral representations \eqref{eq:specTf}, \eqref{eq:specTg} we get
\begin{align}
&( T_{\psi}f,T_{\psi}g )_{L^2(\mathcal{G};Cl_{3,0})}
\nonumber\\
&=
\frac{1}{(2\pi)^{2n}} 
\int_{\mathbb{R}^+} \int_{S0(n)}
\biggl(\int_{\mathbb{R}^n}\mathcal{F}\{F_{a,\sbtheta}\}(-\varepsilon\mbox{\boldmath $b$})
\nonumber\\
& \hspace*{35mm}
\left\{\mathcal{F}\{G_{a,\sbtheta}\}(-\varepsilon\mbox{\boldmath $b$})\right\}^{\sim}
d^n\mbox{\boldmath $b$} \biggr)d\mu
\nonumber\\
&\stackrel{PT}{=}
\int_{\mathbb{R}^+} 
\int_{S0(n)} \frac{1}{(2\pi)^{n}}
\left(\int_{\mathbb{R}^n}F_{{a,\sbtheta}}(\mbox{\boldmath $\xi$})
\widetilde{G_{a,\sbtheta} (\mbox{\boldmath $\xi$})}\,
d^n\mbox{\boldmath $\xi$} \right)
\,d\mu
\nonumber \\
&=
\frac{1}{(2\pi)^{n}}
\int_{\mathbb{R}^n} 
\bigg( \int_{\mathbb{R}^+} a^n \int_{S0(n)} \hat{f}(\mbox{\boldmath $\xi$})
\{\hat{\psi}(ar_{\sbtheta}^{-1}
(\mbox{\boldmath $\xi$}))\}^{\sim}\,
\nonumber \\
& \hspace*{40mm}
\hat{\psi}(ar_{\sbtheta}^{-1}(\mbox{\boldmath $\xi$}))
\widetilde{\hat{g}(\mbox{\boldmath $\xi$})}
d^n\mbox{\boldmath $\xi$}
\bigg)
\,d\mu
\nonumber\\
&=
\frac{1}{(2\pi)^{n}}
\int_{\mathbb{R}^n}\hat{f}(\mbox{\boldmath $\xi$})
\bigg( \int_{\mathbb{R}^+} \int_{S0(n)} a^n \{\hat{\psi}
(ar_{\sbtheta}^{-1}(\mbox{\boldmath $\xi$}))\}^{\sim}
\nonumber \\
& \hspace*{30mm}
\hat{\psi}(ar_{\sbtheta}^{-1}(\mbox{\boldmath $\xi$}))
d\mu 
\bigg)_{\alert{= \,C_{\psi}}}\hspace*{-2mm}
\widetilde{\hat{g}(\mbox{\boldmath $\xi$})}\,
d^n\mbox{\boldmath $\xi$}
\nonumber\\
&=
\frac{1}{(2\pi)^{n}}
\int_{\mathbb{R}^n} \hat{f}(\mbox{\boldmath $\xi$})C_{\psi} 
\widetilde{\hat{g}(\mbox{\boldmath $\xi$})}\,
d^n\mbox{\boldmath $\xi$}
\nonumber\\
&\stackrel{PT}{=}
\int_{\mathbb{R}^n} f(\mbox{\boldmath $x$})C_{\psi} 
\widetilde{g(\mbox{\boldmath $x$})}\,
d^n\mbox{\boldmath $x$} 
= 
( fC_{\psi} , g )_{L^2(\mathbb{R}^n;Cl_{n,0})},  
\end{align} 
where $PT$ denotes the CFT Plancherel theorem.
For the second equality we have also used the fact, that a substitution $\vect{b}'=-\varepsilon\vect{b}, \varepsilon=\pm 1$, as in 
$\int_{\R^n}h(-\varepsilon\vect{b})\,d^n\vect{b}
= \int_{\R^n}h(\vect{b}')d^n\vect{b}'$, does not change the overall sign. 
\end{proof}

As a corollary we get the following \textit{norm relation}:
\begin{align}\label{eq1:col}
\| T_\psi f \|_{L^2(\mathcal{G};Cl_{n})}^2 
  &=
  Sc ( f C_{\psi} , f )_{L^2(\mathbb{R}^n;Cl_{n})}
  \\
  &= C_{\psi}\ast ( f , f )_{L^2(\mathbb{R}^n;Cl_{n})} \,.  
\end{align}
We can further derive the
\begin{theorem}[Inverse Clifford $ Cl_{n}$ wavelet transform]
Any $ f \in L^2(\mathbb{R}^n;Cl_{n})$ can be decomposed with respect to
an admissible Clifford GA wavelet as
\begin{align}\label{eq:invCWT}
  f(\vect{x})  
  &= \int_{\mathcal{G}}
    T_{\psi}f (a,\btheta,\vect{b})\,
    \psi_{a,\btheta,\svect{b}}\,
    C_{\psi}^{-1}\,d\mu d^n\vect{b}
  \\ \nonumber
  &= \int_{\mathcal{G}}
    ( f, \psi_{a,\btheta,\svect{b}}
    )_{L^2(\mathbb{R}^n;Cl_{n})}
    \psi_{a,\btheta,\svect{b}}
    C_{\psi}^{-1}\,d\mu d^n\vect{b} ,
\end{align}
the integral converging in the weak sense.
\end{theorem}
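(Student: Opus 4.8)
The plan is to establish \eqref{eq:invCWT} in the \emph{weak sense}, i.e.\ to verify that for every test function $g\in L^2(\mathbb{R}^n;Cl_{n})$ the multivector signal $h$ defined as the right-hand side of \eqref{eq:invCWT} satisfies $(h,g)_{L^2(\mathbb{R}^n;Cl_{n})}=(f,g)_{L^2(\mathbb{R}^n;Cl_{n})}$ with respect to the inner product \eqref{eq:mc2}; arbitrariness of $g$ then gives $h=f$. First I would write $(h,g)$ as the iterated integral $\int_{\mathbb{R}^n}\bigl(\int_{\mathcal{G}}T_{\psi}f(a,\btheta,\vect{b})\,\psi_{a,\btheta,\svect{b}}(\vect{x})\,C_{\psi}^{-1}\,d\mu\,d^n\vect{b}\bigr)\widetilde{g(\vect{x})}\,d^n\vect{x}$ and exchange the order of integration by Fubini, keeping $T_{\psi}f(a,\btheta,\vect{b})$ to the left of the $\vect{x}$-integral since the geometric product is non-commutative. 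Absolute integrability, which legitimizes the exchange, follows from $f,g\in L^2$, the norm identity \eqref{eq:0mc7}, and admissibility of $\psi$ (finiteness of $C_{\psi}$ in \eqref{eq:admconst} and its invertibility \eqref{eq:ICl}).

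The key algebraic move uses that $C_{\psi}$ is self-reverse, $\widetilde{C_{\psi}}=C_{\psi}$, hence $\widetilde{C_{\psi}^{-1}}=(\widetilde{C_{\psi}})^{-1}=C_{\psi}^{-1}$, so that $C_{\psi}^{-1}\widetilde{g(\vect{x})}=\widetilde{g(\vect{x})\,C_{\psi}^{-1}}$. The inner $\vect{x}$-integral then reads $\int_{\mathbb{R}^n}\psi_{a,\btheta,\svect{b}}(\vect{x})\,\widetilde{g(\vect{x})\,C_{\psi}^{-1}}\,d^n\vect{x}=(\psi_{a,\btheta,\svect{b}},\,g\,C_{\psi}^{-1})_{L^2(\mathbb{R}^n;Cl_{n})}=\widetilde{\,T_{\psi}(g\,C_{\psi}^{-1})(a,\btheta,\vect{b})\,}$, where I use $(M,N)=\widetilde{(N,M)}$ for the inner product \eqref{eq:mc2} together with Definition \ref{df:GAWT}. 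Substituting back, $(h,g)_{L^2(\mathbb{R}^n;Cl_{n})}=\int_{\mathcal{G}}T_{\psi}f(a,\btheta,\vect{b})\,\widetilde{T_{\psi}(g\,C_{\psi}^{-1})(a,\btheta,\vect{b})}\,d\mu\,d^n\vect{b}=(\,T_{\psi}f,\,T_{\psi}(g\,C_{\psi}^{-1})\,)_{L^2(\mathcal{G};Cl_{n})}$.

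Finally I would invoke the inner product relation \eqref{eqM:C1}, valid for arbitrary $L^2$ signals, with its second argument taken to be $g\,C_{\psi}^{-1}\in L^2(\mathbb{R}^n;Cl_{n})$. This yields $(\,T_{\psi}f,\,T_{\psi}(g\,C_{\psi}^{-1})\,)_{L^2(\mathcal{G})}=(\,f\,C_{\psi},\,g\,C_{\psi}^{-1}\,)_{L^2(\mathbb{R}^n)}=\int_{\mathbb{R}^n}f(\vect{x})\,C_{\psi}\,\widetilde{g(\vect{x})\,C_{\psi}^{-1}}\,d^n\vect{x}$, and once more using $\widetilde{g(\vect{x})\,C_{\psi}^{-1}}=C_{\psi}^{-1}\widetilde{g(\vect{x})}$ the constant factors cancel, $C_{\psi}C_{\psi}^{-1}=1$, leaving $\int_{\mathbb{R}^n}f(\vect{x})\,\widetilde{g(\vect{x})}\,d^n\vect{x}=(f,g)_{L^2(\mathbb{R}^n;Cl_{n})}$. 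Since $g$ was arbitrary this is \eqref{eq:invCWT} in the weak sense; its second line is then immediate from $T_{\psi}f(a,\btheta,\vect{b})=(f,\psi_{a,\btheta,\svect{b}})$ in Definition \ref{df:GAWT}.

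The main obstacle is the bookkeeping imposed by non-commutativity: at every step one must track on which side $C_{\psi}^{-1}$ (and each reversion) sits, and the cancellation $C_{\psi}C_{\psi}^{-1}=1$ works only because $C_{\psi}=\widetilde{C_{\psi}}$ keeps $C_{\psi}$ and $C_{\psi}^{-1}$ on the same side of $\widetilde{g}$ throughout. A secondary technical point is the honest justification of Fubini and of the precise meaning of ``converging in the weak sense'': one asserts only equality of pairings with test functions $g$, and never norm convergence of the $\mathcal{G}$-integral itself. For $n=2(\mod 4)$ the sign $\varepsilon=\pm1$ is already absorbed inside \eqref{eqM:C1}, so no separate treatment of the even-dimensional case is needed.
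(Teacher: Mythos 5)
Your proof is correct and follows essentially the same route as the paper: both reduce the claim to the inner product relation \eqref{eqM:C1} by testing against an arbitrary $g\in L^2(\mathbb{R}^n;Cl_n)$. The only cosmetic difference is that the paper first establishes $\int_{\mathcal{G}}T_{\psi}f\,\psi_{a,\btheta,\svect{b}}\,d\mu\,d^n\vect{b}=fC_{\psi}$ weakly and then right-multiplies by $C_{\psi}^{-1}$, whereas you carry $C_{\psi}^{-1}$ through the computation by replacing the test function $g$ with $gC_{\psi}^{-1}$ and using $\widetilde{C_{\psi}^{-1}}=C_{\psi}^{-1}$ — both are valid.
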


\begin{proof}
For any $g\in L^2(\mathbb{R}^n;Cl_{n,0})$
\begin{align}
&( T_{\psi}f,T_{\psi}g )_{L^2(\mathcal{G};Cl_{n,0})} 
\nonumber\\
&=
\int_{\mathcal{G}}
T_{\psi}f (a,\mbox{\boldmath $\theta$},\mbox{\boldmath $b$})
\{T_{\psi}g (a,\mbox{\boldmath $\theta$},\mbox{\boldmath $b$})\}^{\sim}\,
d\mu d^n\mbox{\boldmath $b$}
\nonumber\\
&=
\int_{\mathcal{G}} \int_{\mathbb{R}^n} T_{\psi}f 
(a,\mbox{\boldmath $\theta$},\mbox{\boldmath $b$})
\psi_{a,\btheta,\svect{b}}(\mbox{\boldmath $x$})
\widetilde{g(\mbox{\boldmath $x$})}\,
d^n\mbox{\boldmath $x$}d\mu d^n\mbox{\boldmath $b$}
\nonumber\\
&=
\int_{\mathbb{R}^n} \int_{\mathcal{G}}  
T_{\psi}f (a,\mbox{\boldmath $\theta$},\mbox{\boldmath $b$})
\psi_{a,\btheta,\svect{b}}(\mbox{\boldmath $x$})
\,
d\mu d^n\mbox{\boldmath $b$}\,
\widetilde{g(\mbox{\boldmath $x$})}\,d^n\mbox{\boldmath $x$}
\nonumber\\
&=
\left(\alert{\int_{\mathcal{G}}
T_{\psi}f (a,\mbox{\boldmath $\theta$},\mbox{\boldmath $b$})
\psi_{a,\btheta,\svect{b}}\,
d\mu d^n\mbox{\boldmath $b$}}\,,\, g \right)_{L^2(\mathbb{R}^n;Cl_{n,0})}
\nonumber \\
&\stackrel{IPR}{=} 
( \alert{fC_{\psi}}\,,\, g )_{L^2(\mathbb{R}^n;Cl_{n,0})},
\label{eq:invproof}
\end{align}
where $IPR$ denotes the inner product relation \eqref{eqM:C1}.
Because \eqref{eq:invproof} holds for any $g\in L^2(\mathbb{R}^n;Cl_{n,0})$ we get
\begin{align}
  f(\mbox{\boldmath $x$}) C_{\psi} =
  \int_{\mathcal{G}} T_{\psi}f (a,\mbox{\boldmath $b$},\mbox{\boldmath $\theta$})
  \psi_{a,\btheta,\svect{b}}(\mbox{\boldmath $x$})\,
  d\mu d^n\mbox{\boldmath $b$},
\end{align}
or equivalently the \textit{inverse CWT}
\begin{align}
  f(\mbox{\boldmath $x$}) = 
  \int_{\mathcal{G}} T_{\psi}f (a,\mbox{\boldmath $b$},\mbox{\boldmath $\theta$})
  \psi_{a,\btheta,\svect{b}}(\mbox{\boldmath $x$})\,
 C_{\psi}^{-1}\,d\mu d^n\mbox{\boldmath $b$}. 
\end{align}
\end{proof}

Next is the \textit{reproducing kernel}:
We define for an admissible Clifford mother wavelet $\psi \in L^2(\mathbb{R}^n;Cl_{n})$
\begin{align}
  &\BK_{\psi}(a,\btheta,\vect{b};
  a^{\prime},\btheta^{\prime},\vect{b}^{\prime}) 
  \nonumber \\
  &\quad = 
  \left( 
    \psi_{a,\btheta,\svect{b}}
    C_{\psi}^{-1} , 
    \psi_{a^{\prime},\btheta^{\prime},\svect{b}^{\prime}}
  \right)_{L^2(\mathbb{R}^n;Cl_{n})} \,.
\end{align}
Then 
$\BK_{\psi}(a,\btheta,\vect{b};a^{\prime},\btheta^{\prime},
\vect{b}^{\prime})$
is a reproducing kernel in 
$L^2(\mathcal{G}, d\lambda )$,
i.e,
\begin{align}
  &T_{\psi} f (a^{\prime}, \btheta^{\prime},\vect{b}^{\prime}) 
  \nonumber \\
  &=
  \int_{\mathcal{G}}T_{\psi}f (a,\btheta,\vect{b})
  \BK_{\psi}(a,\btheta,\vect{b};a^{\prime},\btheta^{\prime},\vect{b}^{\prime}) 
  d\lambda \,.
\end{align}

\begin{proof}
By inserting for $f(\vect{x})$ the inverse CWT \eqref{eq:invCWT} into the definition
of the CWT we obtain
\begin{align}
& T_{\psi} f (a^{\prime}, 
\mbox{\boldmath $\theta$}^{\prime},\mbox{\boldmath $b$}^{\prime})
\nonumber\\
&=
\int_{\mathbb{R}^n}\left( \int_{\mathcal{G}}
T_{\psi}f (a,\mbox{\boldmath $\theta$},\mbox{\boldmath $b$})\;
\psi_{a,\btheta,\svect{b}}(\mbox{\boldmath $x$})
\,d\lambda\; C_{\psi}^{-1}\right)
\nonumber\\
& \hspace*{48mm}
\{\psi_{a^{\prime},\btheta^{\prime},\svect{b}^{\prime}}
(\mbox{\boldmath $x$})\}^{\sim}
\,d^n\mbox{\boldmath $x$}
\nonumber\\
&= \int_{\mathcal{G}}
T_{\psi}f (a,\mbox{\boldmath $\theta$},\mbox{\boldmath $b$})
\nonumber\\
& \hspace*{3mm}
\left( \int_{\mathbb{R}^n} 
\psi_{a,\btheta,\svect{b}}(\mbox{\boldmath $x$})C_{\psi}^{-1}
\{\psi_{a^{\prime},\btheta^{\prime},\svect{b}^{\prime}}
(\mbox{\boldmath $x$})\}^{\sim}
\,d^n\mbox{\boldmath $x$} \right)_{\alert{= \BK_{\psi}}}
d\lambda
\nonumber\\
&= \int_{\mathcal{G}} 
T_{\psi}f (a,\mbox{\boldmath $b$},\mbox{\boldmath $\theta$})
\BK_{\psi}(a,\mbox{\boldmath $\theta$},\mbox{\boldmath $b$};
a^{\prime},\mbox{\boldmath $\theta$}^{\prime}, \mbox{\boldmath $b$}^{\prime})\,
d\lambda .
\end{align}
\end{proof}

\begin{theorem}[Generalized Clifford GA wavelet uncertainty principle]
\label{th4.1}
Let $\psi $ be an admissible Clifford algebra mother wavelet.
Then for every $f \in L^2(\mathbb{R}^n;Cl_{n})$, 
the following inequality holds 
\begin{align}
  \label{eq:genCWUP}
   &\| \vect{b} T_{\psi}f(a,\btheta,\vect{b}) \|^2_{L^2({\mathcal G};Cl_{n})}
   C_{\psi} \ast 
   (\widetilde{\bomega\hat{f}} , \widetilde{\bomega \hat{f}})_{L^2(\mathbb{R}^n;Cl_{n})}
   \nonumber \\
   &\;\;\;\geq  
   \frac{n(2\pi)^n}{4}\,\,\left[C_{\psi}\ast ( f,f )_{L^2(\mathbb{R}^n;Cl_{n})}\right]^2.
\end{align}
\end{theorem}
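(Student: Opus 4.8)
The plan is to deduce~\eqref{eq:genCWUP} from the CFT uncertainty principle~\eqref{eq:UPCFT} applied \emph{fibrewise} over the scale--rotation parameters $(a,\btheta)$, and then to recombine the fibres by the Cauchy--Schwarz inequality for the measure $d\mu$, the resulting $\bomega$-integrals being identified through the admissibility relation~\eqref{eq:admconst}.

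\textbf{Fibrewise reduction.} Fix $(a,\btheta)\in\mathbb{R}^+\times SO(n)$ and regard $g_{a,\sbtheta}(\vect{b}):=T_{\psi}f(a,\btheta,\vect{b})$ as an element of $L^2(\mathbb{R}^n;Cl_{n})$ in the variable $\vect{b}$. By the spectral representation~\eqref{eq:specTf} and the CFT inversion~\eqref{eq11}, $g_{a,\sbtheta}$ is --- up to the reflection $\vect{b}\mapsto-\varepsilon\vect{b}$ and the factor $(2\pi)^{-n}$ --- the CFT of the function $F_{a,\sbtheta}$ defined in~\eqref{eq:abbrF}; equivalently $\mathcal{F}\{g_{a,\sbtheta}\}(\bomega)=F_{a,\sbtheta}(\varepsilon\bomega)$ and, by the CFT Plancherel theorem, $\|g_{a,\sbtheta}\|^2_{L^2(\mathbb{R}^n;Cl_{n})}=(2\pi)^{-n}\|F_{a,\sbtheta}\|^2_{L^2(\mathbb{R}^n;Cl_{n})}$. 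Since $\bomega$ is a vector, $|\bomega\,M|^2=|\bomega|^2|M|^2$ for all $M\in Cl_{n}$; with the change of variables $\bomega\mapsto\varepsilon\bomega$ this yields $\|\bomega\,\mathcal{F}\{g_{a,\sbtheta}\}\|^2=\|\bomega\,F_{a,\sbtheta}\|^2$. Applying~\eqref{eq:UPCFT} to $g_{a,\sbtheta}$ and substituting these three identities gives, for every $(a,\btheta)$,
\[
  \|\vect{b}\,g_{a,\sbtheta}\|^2_{L^2(\mathbb{R}^n;Cl_{n})}\;
  \|\bomega\,F_{a,\sbtheta}\|^2_{L^2(\mathbb{R}^n;Cl_{n})}
  \;\ge\;\frac{n}{4(2\pi)^n}\,\|F_{a,\sbtheta}\|^4_{L^2(\mathbb{R}^n;Cl_{n})}.
\]

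\textbf{Recombination and identification.} Taking square roots and integrating the resulting estimate $\|F_{a,\sbtheta}\|^2\le\sqrt{4(2\pi)^n/n}\;\|\vect{b}\,g_{a,\sbtheta}\|\;\|\bomega\,F_{a,\sbtheta}\|$ against $d\mu$, applying Cauchy--Schwarz for $\int_{\mathbb{R}^+}\!\int_{SO(n)}(\cdot)\,d\mu$ to the right side, and squaring, I get
\[
  \Bigl(\int\|F_{a,\sbtheta}\|^2 d\mu\Bigr)^{2}
  \le\frac{4(2\pi)^n}{n}\Bigl(\int\|\vect{b}\,g_{a,\sbtheta}\|^2 d\mu\Bigr)\Bigl(\int\|\bomega\,F_{a,\sbtheta}\|^2 d\mu\Bigr).
\]
Now the three integrals are identified. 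From $d\lambda=d\mu\,d^n\vect{b}$ and $|\vect{b}\,M|^2=|\vect{b}|^2|M|^2$, $\int\|\vect{b}\,g_{a,\sbtheta}\|^2 d\mu=\|\vect{b}\,T_{\psi}f\|^2_{L^2(\mathcal{G};Cl_{n})}$. From $\|F_{a,\sbtheta}\|^2=(2\pi)^n\|g_{a,\sbtheta}\|^2$ together with the norm relation~\eqref{eq1:col}, $\int\|F_{a,\sbtheta}\|^2 d\mu=(2\pi)^n\|T_{\psi}f\|^2_{L^2(\mathcal{G};Cl_{n})}=(2\pi)^n\,C_{\psi}\ast(f,f)_{L^2(\mathbb{R}^n;Cl_{n})}$. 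For the frequency integral I would write $|\widehat{f}(\bomega)\{\widehat{\psi}(ar_{\sbtheta}^{-1}(\bomega))\}^{\sim}|^2=\langle\,\widehat{f}(\bomega)\{\widehat{\psi}(ar_{\sbtheta}^{-1}(\bomega))\}^{\sim}\widehat{\psi}(ar_{\sbtheta}^{-1}(\bomega))\,\widetilde{\widehat{f}(\bomega)}\,\rangle$, pull $\widehat{f}(\bomega)$ and $\widetilde{\widehat{f}(\bomega)}$ out of the $(a,\btheta)$-integral by Fubini, recognise the remaining factor as the admissibility constant $C_{\psi}$ of~\eqref{eq:admconst}, and invoke cyclicity of the scalar part, $\langle\widehat{f}\,C_{\psi}\,\widetilde{\widehat{f}}\rangle=\langle C_{\psi}\,\widetilde{\widehat{f}}\,\widehat{f}\rangle$, to arrive at $\int\|\bomega\,F_{a,\sbtheta}\|^2 d\mu=\int_{\mathbb{R}^n}|\bomega|^2\langle C_{\psi}\,\widetilde{\widehat{f}(\bomega)}\,\widehat{f}(\bomega)\rangle\,d^n\bomega=C_{\psi}\ast(\widetilde{\bomega\widehat{f}},\widetilde{\bomega\widehat{f}})_{L^2(\mathbb{R}^n;Cl_{n})}$. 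Inserting these three identities into the squared inequality and rearranging reproduces~\eqref{eq:genCWUP} (with the constant $n(2\pi)^n/4$).

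\textbf{Main obstacle.} The two steps demanding real care are: (i) the fibrewise reduction for $n=2(\mod 4)$, where one must verify that, because the admissible $\psi$ is confined to a single parity, the phase $e^{\varepsilon i_n\svect{b}\cdot\bomega}$ in~\eqref{eq:cwf} moves to one side with the single sign $\varepsilon$, so that $g_{a,\sbtheta}$ is genuinely the CFT of $F_{a,\sbtheta}$ with no grade mixing and the Plancherel identity is valid despite the non-commutativity of $i_n$; and (ii) the admissibility step, where the multivector constant $C_{\psi}$ must be factored out of the $(a,\btheta)$-integral \emph{between} $\widehat{f}(\bomega)$ and $\widetilde{\widehat{f}(\bomega)}$ --- this is exactly where the hypothesis that~\eqref{eq:admconst} is the \emph{same} constant for a.e.\ $\bomega$ is used, and where cyclicity of $\langle\cdot\rangle$ converts $\langle\widehat{f}C_{\psi}\widetilde{\widehat{f}}\rangle$ into the form matching $C_{\psi}\ast(\widetilde{\bomega\widehat{f}},\widetilde{\bomega\widehat{f}})$. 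Everything else --- the Fubini interchanges, the Cauchy--Schwarz estimate, and the degenerate cases $\|\vect{b}\,T_{\psi}f\|=0$ or an infinite spectral moment (in which~\eqref{eq:genCWUP} is trivial) --- is routine.
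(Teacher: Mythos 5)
Your proof is correct and follows essentially the route the paper intends (it defers to the $n=3$ argument of \cite{MH:CliffWUP}): apply the CFT uncertainty principle \eqref{eq:UPCFT} fibrewise in $\vect{b}$ to $T_{\psi}f(a,\btheta,\cdot)$, recombine over $d\mu$ by Cauchy--Schwarz, and identify the resulting integrals through the norm relation \eqref{eq1:col} and the admissibility constant \eqref{eq:admconst}; the constants work out to $n(2\pi)^n/4$ as claimed. Your ``main obstacle'' (i) is precisely the point the paper's NB addresses --- the parity restriction for $n=2\,(\mathrm{mod}\,4)$ makes the integrated frequency variance independent of $\varepsilon$ --- so nothing is missing.
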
 
\noindent
\textbf{NB:} The integrated variance
\be
  \int_{\R^+}\int_{SO(n)}
  \|\bomega \mathcal{F}\{T_{\psi}f(a,\btheta, \,.\, ) \} \|^2_{L^2(\mathbb{R}^n;Cl_{n})} d \mu
\ee
is \textit{independent} of the wavelet parity $\varepsilon$. Otherwise the {proof} is similar to the one for $n=3$ in \cite{MH:CliffWUP}. For scalar admissibility constant this reduces to

\begin{corollary}[Uncertainty principle for Clifford GA wavelet]
  \label{cor:scUP}
Let $\psi $ be a Clifford algebra  wavelet with scalar 
admissibility constant $C_{\psi}\in\R^n$.
Then for every $f \in L^2(\mathbb{R}^n;Cl_{n})$, the following inequality holds 
\begin{align}\label{eq:UPCWTs}
  &\|\vect{b} \,T_{\psi} f(a,\btheta,\vect{b})\|_{L^2(\mathcal{G};Cl_{n})}^2\;
  \|\bomega \hat{f}\|_{L^2(\mathbb{R}^n;Cl_{n})}^2
  \nonumber
  \\
  & \hspace*{15mm}
  \geq 
  n C_{\psi} \frac{(2\pi)^n}{4} \|f\|^4_{L^2(\mathbb{R}^n;Cl_{n})}.
\end{align}
\end{corollary}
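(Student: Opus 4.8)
The plan is to deduce Corollary~\ref{cor:scUP} directly from Theorem~\ref{th4.1} by specializing the admissibility constant to a scalar $C_\psi\in\R$ and invoking the scalar-product identities established earlier. First I would observe that when $C_\psi$ is scalar, the Clifford scalar product with $C_\psi$ collapses: for any multivector-valued inner product $(h,h)$ one has $C_\psi\ast(h,h)=C_\psi\,\langle(h,h)\rangle_0=C_\psi\,Sc(h,h)$, since the scalar product picks out the grade-$0$ part and $C_\psi$ commutes with everything. In particular $C_\psi\ast(f,f)=C_\psi\|f\|^2_{L^2(\R^n;Cl_n)}$ by \eqref{eq:0mc2}, so the right-hand side $\frac{n(2\pi)^n}{4}[C_\psi\ast(f,f)]^2$ of \eqref{eq:genCWUP} becomes $\frac{n(2\pi)^n}{4}C_\psi^2\|f\|^4_{L^2(\R^n;Cl_n)}$.

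Next I would treat the second factor on the left of \eqref{eq:genCWUP}. Here $C_\psi\ast(\widetilde{\bomega\hat f},\widetilde{\bomega\hat f})=C_\psi\,Sc(\widetilde{\bomega\hat f},\widetilde{\bomega\hat f})$, and I would note that $Sc(\widetilde{h},\widetilde{h})=Sc(h,h)=\|h\|^2$ because reversion fixes the scalar part and the modulus is reversion-invariant ($|M|^2=M\ast\widetilde M$). Applying this with $h=\bomega\hat f$ gives $C_\psi\ast(\widetilde{\bomega\hat f},\widetilde{\bomega\hat f})=C_\psi\|\bomega\hat f\|^2_{L^2(\R^n;Cl_n)}$. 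Substituting both simplifications into \eqref{eq:genCWUP}, the inequality reads
\begin{align*}
  C_\psi\,\|\vect{b}\,T_\psi f\|^2_{L^2(\mathcal G;Cl_n)}\;\|\bomega\hat f\|^2_{L^2(\R^n;Cl_n)}
  \;\geq\; \frac{n(2\pi)^n}{4}\,C_\psi^2\,\|f\|^4_{L^2(\R^n;Cl_n)}.
\end{align*}
Since $C_\psi>0$ (its grade-$0$ part is strictly positive, as shown in the admissibility discussion where $\langle C_\psi\rangle_0>0$, and here $C_\psi=\langle C_\psi\rangle_0$), I may divide through by $C_\psi$, which yields exactly \eqref{eq:UPCWTs}.

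The only genuine content beyond bookkeeping is making sure the passage $C_\psi\ast(h,h)=C_\psi\,Sc(h,h)=C_\psi\|h\|^2$ is airtight for a scalar $C_\psi$; this is where I would be most careful, since in the non-scalar case of Theorem~\ref{th4.1} the ordering of $C_\psi$ inside the scalar product matters and one cannot simply pull it out. For scalar $C_\psi$ there is no ordering issue: $C_\psi$ is central, $C_\psi\ast M=\langle C_\psi M\rangle_0=C_\psi\langle M\rangle_0$ for every $M\in Cl_n$, and the positivity $C_\psi>0$ legitimizes the division. Everything else is an immediate transcription of Theorem~\ref{th4.1} with the scalar identities \eqref{eq:0mc2} and $|M|^2=M\ast\widetilde M$ inserted. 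I would also remark, as the paper's \textbf{NB} already does, that the $\varepsilon$-independence of the integrated frequency variance is inherited from the proof of Theorem~\ref{th4.1}, so no separate argument is needed for the spinor versus odd-parity cases when $n=2\,(\mathrm{mod}\,4)$.
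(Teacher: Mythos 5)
Your proposal is correct and follows exactly the route the paper intends: the paper offers no separate proof of Corollary~\ref{cor:scUP}, simply stating that Theorem~\ref{th4.1} "reduces to" it for scalar $C_{\psi}$, and your computation ($C_{\psi}\ast(f,f)=C_{\psi}\|f\|^2$, $C_{\psi}\ast(\widetilde{\bomega\hat f},\widetilde{\bomega\hat f})=C_{\psi}\|\bomega\hat f\|^2$ via reversion-invariance of the scalar part, then division by $C_{\psi}=\langle C_{\psi}\rangle_0>0$) is precisely the omitted bookkeeping. The only cosmetic remark is that the hypothesis $C_{\psi}\in\R^n$ in the statement is evidently a typo for $C_{\psi}\in\R$, which you have interpreted correctly.
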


\begin{itemize}
\item 
This shows indeed, that Theorem \ref{th4.1} 
represents a multivector generalization
of the uncertainty principle 
for Clifford wavelets with scalar admissibility constant.
\item 
Compare with the (direction independent) uncertainty principle \eqref{eq:UPCFT} for the CFT.
\end{itemize}

\subsection{Example of Clifford GA Gabor wavelets}

Finally \textit{Clifford (geometric) algebra Gabor Wavelets} are defined as 
(variances $\sigma_k, 1 \leq k \leq n$, for
$n=2(\mod 4): A \in Cl^+_n$ or $A\in Cl^-_n$)
\begin{align}
  \label{eq:CGW}
  \psi^{c}(\vect{x}) 
  &= 
  \frac{A \,\, e^{-\frac{1}{2}\sum_k\frac{x_k^2}{\sigma_k^2} }}{(2\pi)^{\frac{n}{2}} \prod_{k=1}^n \sigma_k }\,
(
  e^{i_n \bomega_0\cdot \svect{x}}
  - \underbrace{e^{-\frac{1}{2}\sum_{k=1}^n \sigma_k^2 \omega_{0,k}^2}}_{\mbox{constant}}
),
\nonumber
\\
&\vect{x},\bomega_0 \in \R^n, \quad \mbox{constant }A \in Cl_n\,.
\end{align}

The spectral (CFT) representation of the Clifford Gabor wavelets \eqref{eq:CGW} is
\begin{align}
  &\mathcal{F}\{\psi^{c}\}(\bomega) 
  = \widehat{\psi^{c}}(\bomega) 
  \\ \nonumber
  &=
  A 
  (\underbrace{e^{-\frac{1}{2}\sum_k \sigma_k^2 (\omega_{k}-\omega_{0,k})^2}
  - e^{-\frac{1}{2}\sum_k \sigma_k^2 (\omega_{k}^2+\omega_{0,k}^2)}}_{=\phi(\bomega)\in \R})
  \,.
\end{align}
It follows with \eqref{eq:admconst} that the Clifford Gabor wavelet admissibility constant 
\begin{align}
  C_{\psi^{c}} 
  &= \int_{\mathbb{R}^n}
    \frac{\{\widehat{\psi^{c}}(\mbox{\boldmath $\xi$})\}^{\sim}\widehat{\psi^{c}}(\mbox{\boldmath $\xi$})}
    {|\mbox{\boldmath $\xi$}|^n}\,
    d^n\mbox{\boldmath $\xi$}
  \nonumber \\
  &= \widetilde{A}A \int_{\mathbb{R}^n}
    \frac{\phi(\mbox{\boldmath $\xi$})^2 }
    {|\mbox{\boldmath $\xi$}|^n}\,
    d^n\mbox{\boldmath $\xi$} \,.
\end{align}
If e.g. $A$ is a vector or a product of vectors (versor), then 
$C_{\psi^{c}}$ will be scalar.

\section{Conclusion}

We have introduced real Clifford (geometric) algebra wavelets for multivector signals taking values in $Cl_n$. \textit{Real} means that we completely avoid to use the field of complex numbers $\C$. This also applies to the use of a real Clifford (geometric) algebra Fourier transform for the spectral representation. An extension to $Cl_{0,n'}, n'=1,2(\mod 4)$ appears straight forward.

\section*{Acknowledgments}
  Soli deo gloria. I do thank my dear family, B. Mawardi, G. Scheuermann, D. Hildenbrand and V. Skala.

\bibliographystyle{plain}

\end{document}